\theoremstyle{plain}
\newtheorem{theorem}{Theorem}
\newtheorem{lemma}{Lemma}
\newtheorem{corollary}{Corollary}
\newtheorem{definition}{Definition}
\newtheorem{conjecture}{Conjecture}
\newtheorem*{question*}{Question}
\def \bP {\mathbb{P}}
\def \bE {\mathbb{E}}
\def \bR {\mathbb{R}}
\newcommand{\naturals}{\mathbb{N}}
\definecolor{myblue}{rgb}{.8, .8, 1}
\definecolor{mathblue}{rgb}{0.2472, 0.24, 0.6} 
\definecolor{mathred}{rgb}{0.6, 0.24, 0.442893}
\definecolor{mathyellow}{rgb}{0.6, 0.547014, 0.24}
\crefname{lemma}{Lemma}{Lemmas}
\Crefname{lemma}{Lemma}{Lemmas}
\crefname{thm}{Theorem}{Theorems}
\Crefname{thm}{Theorem}{Theorems}
\crefname{assumption}{Assumption}{Assumptions}
\Crefname{assumption}{Assumption}{Assumptions}
\begin{document}
	
		
		\title{Expectation of the Largest bet size in Labouchere System}
		\author{Yanjun Han and Guanyang Wang\thanks{Yanjun Han is with the Department of Electrical Engineering, Stanford University. Guanyang Wang is with the Department of Mathematics, Stanford University. }}
		
		
%
%
%

		
%
			\maketitle
		\begin{abstract}
			For Labouchere system with winning probability $p$ at each coup, we prove that the expectation of the largest bet size under any initial list is finite if $p>\frac{1}{2}$, and is infinite if $p\le \frac{1}{2}$, solving the open conjecture in \cite{grimmett2001one}. The same result holds for a general family of betting systems, and the proof builds upon a recursive representation of the optimal betting system in the larger family.
		\end{abstract}
		%
	
	\tableofcontents 
	\section{Introduction}
	The Labouchere system, also known as the cancellation system, is one of the most well-known betting systems used in roulette. It was popularized by Henry Du Pr\'e Labouchere, an English politician, writer and journalist. Before the betting, the bettor chooses an initial list $L_0$ of positive real numbers (e.g., $L_0=(1,2,3,4)$). During each bet, the bet size equals the sum of the first and last numbers on the list (if only one number remains on the list, then the bet size equals that number). After a win, the first and last terms are canceled from the list; after a loss, the amount just lost is appended to the last term of the list. This system is continued until the list is empty. Table \ref{tab.Illustration of Labouchere system} illustrates an example of Labouchere system.
	
	\begin{table}[htbp]
		\centering
		\caption{An illustration of the Labouchere system with initial list $L_0=(1, 2, 3, 4)$.}
		\label{tab.Illustration of Labouchere system}
		\begin{tabular}{|c|c|c|c|c|}
			\hline
			Coup $n$ & Bet Size $B_n$ & Result & List $L_n$   & 
			\begin{tabular}[c]{@{}c@{}}Target Profit $T_n$\end{tabular} \\ \hline
			&     &        & 1, 2, 3, 4    &             10                                                \\ \hline
			1     & 5   & Win    & 2, 3          & 5                                                           \\ \hline
			2     & 5   & Loss   & 2, 3, 5       &  10                                                           \\ \hline
			3     & 7   & Loss   & 2, 3, 5, 7    &   17                                                          \\ \hline
			4     & 9   & Loss   & 2, 3, 5, 7, 9 &  26                                                         \\ \hline
			5     & 11  & Win    & 3, 5, 7       &  15                                                          \\ \hline
			6     & 10  & Loss   & 3, 5, 7, 10   & 25                                                         \\ \hline
			7     & 13  & Win    & 5, 7          &  12                                                          \\ \hline
			8     & 12  & Win    & $\varnothing$   & 0                                                          \\ \hline
		\end{tabular}
	\end{table}
	
	We introduce the following notations.  Let $L_n$ be the list after the $n$-th coup, $l_n$ be the corresponding list length, $B_n$ be the bet size at $n$-th coup, $T_n$ be the remaining target profit (i.e., the sum of the numbers in the list) after $n$-th coup, and $N$ be the stopping time that the list first becomes empty, i.e., $L_N = \varnothing$. In this paper, we investigate the behavior of the largest bet size $B^\star \triangleq \max_{1\le n\le N} B_n$ (or $\sup_{n\ge 1}B_n$ if $N=\infty$) in the Labouchere system, and in particular, whether or not $B^\star$ has a finite expectation.
	
	There is very limited literature on analyzing the Labouchere system. Let $p\in [0,1]$ be the winning probability at each coup, where we assume that the outcomes at different coups are independent. By the standard theory of asymmetric random walk, it is straightforward to show that $N < \infty$ almost surely if and only if $p \geq \frac 13$ and $\bE[N]<\infty$ if and only if $p>\frac{1}{3}$. Downton \cite{downton1980note} found a recursion for the distribution of the stopping time $N$ in the case that the initial list $L_0$ is $(1, 2, 3, 4)$, and Ethier \cite{ethier2008absorption} generalized this result to arbitrary initial list and gave an explicit formula using a generalized version of the ballot theorem \cite{bertrand1887solution,barbier1887generalisation}. Specifically, the stopping time $N$ has finite $k$-th moment for any $k$ if and only if $p>\frac{1}{3}$. However, Grimmett and Stirzaker \cite[Problem 12.9.15]{grimmett2001one} showed that both $\max_{1\le n\le N} T_n$ and $\sum_{n=1}^N B_n$ have infinite expectations if $p=\frac{1}{2}$. It was also stated in \cite{grimmett2001one} that $\bE[B^\star]=\infty$, but we were informed by Ethier that the proof was incomplete (via an email exchange between him and Grimmett in February 2006). Hence, it remains an open conjecture for more than a decade if the largest bet size $B^\star$ also has an infinite expectation when $\frac{1}{3}<p\le \frac{1}{2}$, which is the main focus of this paper. 
	
	There is also another betting system which is similar to the Labouchere system, i.e., the Fibonacci system. Instead of considering the first and last numbers in the list at each coup, the \emph{last} two numbers are added or canceled in the Fibonacci system. Ethier \cite{ethier2010doctrine} showed that $\bE B^\star  = +\infty$ in Fibonacci system if and only if $p\le \frac{1}{2}$. However, the proof heavily relies on the fact that any list in a Fibonacci system is uniquely determined by its length, which does not hold for the Labouchere system where the list evolves in a more complicated ``history dependent" manner.

	
	\section{Main Results}
	To study the Labouchere system, we first introduce a larger family of betting systems called \emph{$(a,b)$-list systems}: 
	\begin{definition}[$(a,b)$-List System]\label{def.list_system}
		Let $a<0\le b$ be integers. An $(a,b)$-list system consists of a target sequence $\{T_n\}$, a bet sequence $\{B_n\}$ and a length sequence $\{l_n\}$, which evolve as follows:
		\begin{enumerate}
			\item At the beginning, $T_0>0$ and $l_0\in \{1,2,\cdots\}$; 
			\item At $n$-th coup, the system makes a bet size $B_n\in [0,T_{n-1}]$ which may depend on the entire history. Then the target and length sequences evolve as
			\begin{align*}
			T_n = \begin{cases}
			T_{n-1} - B_n & \text{if wins} \\
			T_{n-1} + B_n & \text{if loses}
			\end{cases}, \qquad l_n = \begin{cases}
			(l_{n-1} + a)_+ & \text{if wins} \\
			l_{n-1} + b & \text{if loses}
			\end{cases}. 
			\end{align*}
			\item Termination condition: let $N=\inf\{n: l_n=0\}$ be the stopping time that the length becomes zero, we must have $T_n=l_n=0$ for any $n\ge N$ and $B_n=0$ for any $n>N$. 
		\end{enumerate}
	\end{definition}
	In such a list system, target $T_n$ represents the remaining amount of money one would like to earn at the end of $n$-th coup; consequently, $T_n$ shrinks after a win, and increases after a loss. Length $l_n$ represents the length of the ``list'' at $n$-th coup, where it may be some real/virtual list which governs the betting process. For example, the well-known martingale system (where the bet is doubled after each loss) belongs to the $(-1,0)$-list system with $l_0=1$ and $B_n=T_{n-1}$, and both Labouchere and Fibonacci systems fall into the category of $(-2,1)$-list systems. The termination condition ensures that, as long as the list length $l_n$ hits zero, the target must be fulfilled as well (i.e., $T_n=0$), and the betting process terminates.
	
	In this paper, we only consider $(-2,1)$-list systems where the Labouchere system is included, but our results and proof techniques are generalizable to general $(a,b)$-list systems. Our first result characterizes the behavior of the largest bet size $B^\star$ under general list systems: 
	\begin{theorem}\label{thm.general_list_system}
		For any $(-2,1)$-list system, the following holds: 
		\begin{enumerate}
			\item If $p>\frac{1}{2}$, we have $\bE[B^\star]<\infty$; 
			\item If $\frac{1}{3} < p < \frac{1}{2}$, we have $\bE[B^\star]=\infty$; 
			\item If $p\le \frac{1}{3}$ and $B_n\ge c_1l_{n-1} + c_2$ for some constants $c_1>0, c_2\in \bR$ almost surely, we have $\bE[B^\star]=\infty$. 
		\end{enumerate}
	\end{theorem}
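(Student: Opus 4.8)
The three regimes call for quite different arguments, handled separately.

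\emph{Part (1), finiteness for $p>\tfrac12$.} The plan is to dominate $B^\star$ by an exponential-of-the-length transform of the target. For $\beta>1$ put $Z_n:=T_n\beta^{l_n}$. Since $B_n\in[0,T_{n-1}]$, the map $B_n\mapsto\bE[Z_n\mid\mathcal F_{n-1}]$ is affine, so (the cases $l_{n-1}\in\{1,2\}$ checked separately)
\[
\bE[Z_n\mid\mathcal F_{n-1}]\ \le\ \max\!\big(\,p\beta^{-2}+(1-p)\beta,\ 2(1-p)\beta\,\big)\,Z_{n-1}.
\]
Because $p>\tfrac13$ the function $\beta\mapsto p\beta^{-2}+(1-p)\beta$ falls below $1$ just to the right of $\beta=1$, and because $p>\tfrac12$ we have $\tfrac1{2(1-p)}>1$; hence $\beta$ may be picked in $\big(1,\min\{\beta^{\star\star},\tfrac1{2(1-p)}\}\big)$, $\beta^{\star\star}>1$ being the larger root of $p\beta^{-2}+(1-p)\beta=1$, so that $\bE[Z_n\mid\mathcal F_{n-1}]\le\rho Z_{n-1}$ for some $\rho<1$. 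Also $Z$ has bounded multiplicative jumps, $Z_n\le2\beta Z_{n-1}$, so for $\alpha>1$ close enough to $1$, $\bE[Z_n^\alpha\mid\mathcal F_{n-1}]\le(2\beta)^{\alpha-1}\rho\,Z_{n-1}^\alpha\le\rho'Z_{n-1}^\alpha$ with $\rho'<1$; summing, $\bE[\sup_nZ_n^\alpha]\le\sum_n\bE[Z_n^\alpha]<\infty$, so $\sup_nZ_n\in L^1$. Since a bet is made only while $l_{n-1}\ge1$, $B_n\le T_{n-1}=\beta^{-l_{n-1}}Z_{n-1}\le\beta^{-1}Z_{n-1}$, whence $B^\star\le\beta^{-1}\sup_nZ_n$ and $\bE[B^\star]<\infty$, with a bound depending only on $p,l_0,T_0$.

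\emph{Part (3), $p\le\tfrac13$ with $B_n\ge c_1l_{n-1}+c_2$.} Here $B^\star\ge\sup_nB_n\ge c_1\sup_nl_{n-1}+c_2$, and the length sequence $\{l_n\}$ --- which does not depend on the bets --- is a walk absorbed at $0$ with steps $+1$ (probability $1-p$) and $-2$, or to $0$ (probability $p$). For $p\le\tfrac13$ every increment has nonnegative mean, so $\{l_n\}$ is a submartingale; since its up-steps are unit it cannot overshoot a level $M$, and optional stopping at the (a.s.\ finite) exit time of $\{0,M\}$ gives $\bP(\sup_nl_{n-1}\ge M)\ge l_0/M$ for $M\ge l_0$. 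Hence $\bE[\sup_nl_{n-1}]=\infty$ and $\bE[B^\star]=\infty$.

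\emph{Part (2), infiniteness for $\tfrac13<p<\tfrac12$ (the crux).} Since $p>\tfrac13$ the process terminates a.s., and its last bet is forced and all-in: $B^\star\ge B_N=T_{N-1}$ with $l_{N-1}\in\{1,2\}$. Introduce
\[
M_n:=T_n\,\beta_2^{\,l_n}\,\gamma^{-n},\qquad \beta_2:=\big(\tfrac{p}{1-p}\big)^{1/3}\in(0,1),\qquad \gamma:=p\beta_2^{-2}+(1-p)\beta_2=2(1-p)\beta_2;
\]
the exponent $\beta_2$ is exactly the one killing the $B_n$-dependence in $\bE[T_n\beta_2^{l_n}\mid\mathcal F_{n-1}]$, so $M_n$ is a nonnegative martingale \emph{for every betting rule}, and $\gamma>1$ throughout $(\tfrac13,\tfrac12)$. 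As $M_0=T_0\beta_2^{l_0}>0$ while $M_N=0$, $\{M_n\}$ is not uniformly integrable; and since $\gamma>1,\ \beta_2<1,\ l_{N-1}\le2$,
\[
T_{N-1}=M_{N-1}\,\gamma^{\,N-1}\beta_2^{-l_{N-1}}\ \ge\ M_{N-1},
\]
so it is enough to prove $\bE[M_{N-1}]=\infty$. Sample $M$ at the decision epochs $\sigma_1<\sigma_2<\cdots$ (the times with $l_{\sigma_i-1}\in\{1,2\}$), put $M^{(i)}:=M_{\sigma_i-1}$, and let $K'$ be the epoch at which termination occurs, so $M_{N-1}=M^{(K')}$. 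At a decision epoch $M$ either drops to $0$ (probability $p$: the terminating case) or is multiplied by $\tfrac1{1-p}$ (probability $1-p$), and between epochs it performs a finite martingale excursion driven by the negative-drift return of $l$ to $\{1,2\}$. If these excursions are uniformly integrable --- as they are for the ``bet nothing until forced'' system, where $M$ stays within a fixed multiple of its epoch value --- then $i\mapsto M^{(i)}\mathbf 1\{K'\ge i\}$ is again a nonnegative martingale, with constant mean $M_0>0$ and a.s.\ limit $0$, so $\bE[\max_{i\le K'}M^{(i)}]=\infty$; since $M^{(i)}\le\beta_2T^{(i)}=\beta_2B_{\sigma_i}\le\beta_2B^\star$, this gives $\bE[B^\star]=\infty$.

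\emph{The main obstacle.} The one genuinely delicate point is the uniform integrability of the between-epoch excursions of $M$ under an \emph{arbitrary} betting rule: a rule placing large free bets while $l\ge3$ could make $M$ leak mass over an excursion and destroy the epoch-sampled martingale. This is where a recursive representation of the optimal list system is used: minimizing (resp.\ maximizing) $\bE[B^\star]$ decomposes across decision epochs --- after a forced loss the continuation is an instance of the same problem with target doubled and length reset --- and solving the resulting fixed-point relation both identifies the extremal system, whose excursions are tractable enough to run the above argument, and pins down the value $\tfrac{p}{1-2(1-p)}T_0=+\infty$ when $p<\tfrac12$ (and $\tfrac{p}{2p-1}T_0$ when $p>\tfrac12$, recovering Part (1)).
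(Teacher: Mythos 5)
Parts (1) and (3) of your proposal are correct, and part (1) is a genuinely different route from the paper's: you run a Lyapunov/supermartingale argument with $Z_n=T_n\beta^{l_n}$, exploit that the conditional drift is affine in $B_n$ to get a uniform contraction $\bE[Z_n\mid\mathcal F_{n-1}]\le\rho Z_{n-1}$ for $\beta\in(1,\min\{\beta^{\star\star},\tfrac{1}{2(1-p)}\})$, boost to an $\alpha$-th moment via the bounded jump $Z_n\le 2\beta Z_{n-1}$, and dominate $B^\star$ by $\beta^{-1}\sup_n Z_n$. This is self-contained and more elementary than the paper, which instead proves $p>\tfrac12$ by a change of measure to the fair game ($dP/dQ\le C\rho^N$ with $\rho<1$, $B^\star\le T_0 2^N$) combined with the $p=\tfrac12$ bound $\bE[B^\star(\log B^\star)^{-(1+\epsilon)}]<\infty$ of Theorem \ref{thm.moment_bounds} and Ethier's tail asymptotics for $N$. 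Your part (3) supplies the optional-stopping details behind the paper's one-line appeal to asymmetric random walks; it is fine.

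Part (2), however, has a genuine gap, and it sits exactly where you flag it. Your martingale $M_n=T_n\beta_2^{l_n}\gamma^{-n}$ is a good object (it is a nonnegative martingale for every betting rule), but the step ``$i\mapsto M^{(i)}\Indc\{K'\ge i\}$ is again a nonnegative martingale'' requires uniform integrability of the between-epoch excursions, which fails for some admissible rules: e.g.\ the always-all-in rule makes $T$ (hence $M$) vanish at the first win, and since the length cannot return to $\{1,2\}$ without a win, one gets $\bE[M^{(1)}]=0<M_0$ — a total leak, so the epoch-sampled process is only a strict supermartingale and the ``constant mean, a.s.\ limit $0$'' conclusion is unavailable. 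Your proposed repair — that minimizing $\bE[B^\star]$ reduces to the ``bet nothing until forced'' system via a recursive fixed-point relation — is precisely the hard claim and is asserted, not proved; note also that the paper's ``recursive representation of the optimal list system'' is used only in the proof of Theorem \ref{thm.constrained_bet} (the $p=\tfrac12$ case with constrained betting proportions, Lemmas \ref{lemma.basic_f}--\ref{lemma.decreasing}), not for Theorem \ref{thm.general_list_system}. The paper instead handles $\tfrac13<p<\tfrac12$ by the reverse change of measure, $dP/dQ\ge C\rho^N$ with $\rho>1$, together with $\bE_Q[B^\star\log B^\star]=\infty$ from Theorem \ref{thm.moment_bounds} (Fenchel--Young plus $\bE_Q[e^{cN}]<\infty$ and the fair-game divergence of $\bE_Q[\sum_{n\le N}B_n]$). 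Incidentally, your own martingale closes the gap without any epoch sampling: since $N<\infty$ a.s.\ for $p>\tfrac13$, $\{M_n\}$ has a.s.\ limit $0$ but constant mean $M_0>0$, so $\bE[\sup_n M_n]=\infty$; and since $T_n\le T_0+nB^\star$ and $\gamma>1$, one has $\sup_n M_n\le T_0+\bigl(\sup_{n\ge1}n\gamma^{-n}\bigr)B^\star$, whence $\bE[B^\star]=\infty$. As written, though, your part (2) does not constitute a proof.
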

	
	Theorem \ref{thm.general_list_system} shows that for any $(-2,1)$-list systems, the expectation $\bE[B^\star]$ of the largest bet size $B^\star$ has a phase transition at $p=\frac{1}{2}$: the expectation is finite if the player is favored, and is infinite if the house takes the advantage. Consequently, we have the following corollary: 
	\begin{corollary}\label{cor.p_not_half}
		For the Labouchere system with any initial list, we have $\bE[B^\star]<\infty$ if $p>\frac{1}{2}$ and $\bE[B^\star]=\infty$ if $p<\frac{1}{2}$. 
	\end{corollary}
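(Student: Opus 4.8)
The plan is to deduce Corollary~\ref{cor.p_not_half} from Theorem~\ref{thm.general_list_system} by first exhibiting the Labouch\`ere system as a list system in the sense of Definition~\ref{def.list_system}, and then, for the regime $p\le\frac13$, verifying the extra linear-growth hypothesis required by part~(3) of that theorem.

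For the embedding, fix an initial list $L_0$ of positive reals and put $T_0=\sum_{x\in L_0}x>0$, $l_0=|L_0|\in\{1,2,\dots\}$. Writing the list after the $(n-1)$-st coup as $(z_1,\dots,z_{l_{n-1}})$, one has $B_n=z_1+z_{l_{n-1}}$ (and $B_n=z_1$ when $l_{n-1}=1$), so $0\le B_n\le\sum_i z_i=T_{n-1}$; a win deletes the two endpoints, whose sum is exactly $B_n$, so $T_n=T_{n-1}-B_n$ and $l_n=(l_{n-1}-2)_+$; a loss appends $B_n$ at the right, so $T_n=T_{n-1}+B_n$ and $l_n=l_{n-1}+1$; and when $l_{n-1}\in\{1,2\}$ the bet equals the sum of all entries, i.e.\ $B_n=T_{n-1}$. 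These are exactly the dynamics in Definition~\ref{def.list_system}, so Theorem~\ref{thm.general_list_system} applies: part~(1) gives $\bE[B^\star]<\infty$ for $p>\frac12$, and part~(2) gives $\bE[B^\star]=\infty$ for $\frac13<p<\frac12$.

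It remains to treat $p\le\frac13$, for which I would establish the deterministic bound $B_n\ge a\,l_{n-1}-a(l_0-1)$ with $a\triangleq\min L_0>0$. The structural facts used are that entries of a Labouch\`ere list are never altered once written, are removed only from the two ends, and are appended only at the right; hence the current list is always a (possibly empty) contiguous block of surviving original entries followed by a block of $k$ previously-appended entries, and since no original entry is ever created, $k\ge l_{n-1}-l_0$. Listing the appended entries left to right as $z^{(1)},\dots,z^{(k)}$, I claim that $z^{(i)}\ge a+z^{(i-1)}$ for $2\le i\le k$: at the moment $z^{(i)}$ was written it was the rightmost entry of the list, and the entry lying immediately to its left at that moment must be the present $z^{(i-1)}$ (nothing can ever have lain strictly between them, since every subsequent append goes to the right of $z^{(i)}$ and $z^{(i)}$ is never deleted afterwards), so $z^{(i)}=(\text{first entry at that moment})+z^{(i-1)}\ge a+z^{(i-1)}$. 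Telescoping and using $z^{(1)}\ge a$ gives $z_{l_{n-1}}\ge ka$ (trivially so if $k=0$), hence $B_n=z_1+z_{l_{n-1}}\ge a+ka\ge a\,l_{n-1}-a(l_0-1)$ when $l_{n-1}\ge2$, while the case $l_{n-1}=1$ is immediate from $B_n=z_1\ge a$. So Theorem~\ref{thm.general_list_system}(3) applies with $c_1=a>0$ and $c_2=-a(l_0-1)$, giving $\bE[B^\star]=\infty$; combining the three regimes yields the corollary.

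The only nonroutine ingredient is this linear lower bound on $B_n$, and inside it the inequality $z^{(i)}\ge a+z^{(i-1)}$; the care it requires is entirely combinatorial bookkeeping about the deque --- that two currently adjacent appended entries were already adjacent when the later one was written --- and no analytic difficulty is expected beyond it.
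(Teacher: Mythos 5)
Your proposal is correct and follows essentially the same route as the paper: both cases $p>\frac12$ and $\frac13<p<\frac12$ come directly from Theorem~\ref{thm.general_list_system}, and the case $p\le\frac13$ is handled by verifying the linear lower bound $B_n\ge c_1 l_{n-1}+c_2$ with $c_1=\min L_0$, which is exactly the paper's verification (the paper states it as ``a simple induction on $n$ yields $B_n\ge a(l_{n-1}-l_0)_+$,'' while you supply the deque bookkeeping explicitly and obtain the marginally stronger constant $c_2=-a(l_0-1)$).
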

	
	The fair-game case $p=\frac{1}{2}$ requires more delicate analysis, and is summarized in the following theorem: 
	\begin{theorem}\label{thm.constrained_bet}
		Let $(\overline{b}_l)_{l=1}^\infty, (\underline{b}_l)_{l=1}^\infty$ be two sequences taking value in $[0,1]$. Suppose that a $(-2,1)$-list system satisfies $T_{n-1}\underline{b}_{l_{n-1}} \le B_n \le T_{n-1}\overline{b}_{l_{n-1}}$ for any $n$, and one of the following conditions holds: 
		\begin{enumerate}
			\item $\lim_{l\to\infty} \overline{b}_l = 0$; 
			\item $\inf_{l} \underline{b}_l > 0$, 
		\end{enumerate}
		we have $\bE[B^\star]=\infty$ under $p=\frac{1}{2}$. 
	\end{theorem}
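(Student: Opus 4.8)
The plan rests on two facts available at $p=\tfrac12$: the target sequence $(T_n)$ is a nonnegative martingale absorbed at $T_N=0$, and the termination rule forces \emph{full} bets whenever the list has length $1$ or $2$. Since the length walk has drift $\tfrac12\cdot1+\tfrac12\cdot(-2)=-\tfrac12<0$, one has $N<\infty$ a.s.\ (indeed $\bE[N]<\infty$). Let $m_1<m_2<\cdots$ be the coups with $l_{n-1}\in\{1,2\}$, so that $B_{m_i}=T_{m_i-1}=:S_i$; because the list must lie in $\{1,2\}$ just before it empties, the game ends at $m_K=N$, where $K$ is the first $i$ with a win at coup $m_i$. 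Each coup $m_i$ being an independent fair coin, $K$ is geometric with $\bP(K\ge j)=2^{-(j-1)}$, and on $\{j<K\}$ coup $m_j$ is a loss, so $T_{m_j}=2S_j$. Note that $B^\star\ge S_K$, and that if $B_n\ge\beta T_{n-1}$ for all $n$ then also $B^\star\ge\beta T^\star$ with $T^\star:=\max_nT_n$. I would split the proof according to which of (1), (2) holds.

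Case (2) follows from a general fact: $\bE[T^\star]=\infty$ for \emph{every} list system at $p=\tfrac12$. For $\lambda>T_0$ let $\tau_\lambda=\inf\{n:T_n\ge\lambda\}$; since $B_n\le T_{n-1}<\lambda$ strictly before $\tau_\lambda$, the overshoot obeys $T_{\tau_\lambda}<2\lambda$, so $(T_{n\wedge\tau_\lambda})_n$ is a bounded, hence uniformly integrable, martingale; using $T_n\to T_N=0$ on $\{\tau_\lambda=\infty\}$ this yields $T_0=\bE[T_{\tau_\lambda}\mathbf 1\{\tau_\lambda<\infty\}]\le 2\lambda\,\bP(T^\star\ge\lambda)$, whence $\bP(T^\star\ge\lambda)\ge T_0/(2\lambda)$ and $\bE[T^\star]=\int_0^\infty\bP(T^\star\ge\lambda)\,\diff\lambda=\infty$. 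Under (2), $B_n\ge(\inf_l\underline b_l)T_{n-1}$ gives $B^\star\ge(\inf_l\underline b_l)T^\star$, so $\bE[B^\star]=\infty$.

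Case (1) needs a different idea, since $\underline b_l$ may vanish and the bound $B^\star\ge\beta T^\star$ is useless. Here I would work with the full-bet values $S_j$ and aim for the optional-stopping identity $\bE[S_{j+1}\mid\mathcal F_{m_j},\,j<K]=T_{m_j}=2S_j$ (and $\bE[S_1]=T_0$) for the martingale $T$ over a ``length excursion above $2$'' — from the list being $3$ at time $m_j$ until it re-enters $\{1,2\}$ at time $m_{j+1}-1$. Granting this, since $\{K\ge j+1\}=\{K\ge j\}\cap\{\text{loss at }m_j\}$ and coup $m_j$ is independent of $\mathcal F_{m_j-1}\ni\mathbf 1\{K\ge j\},S_j$, one gets $\bE[S_{j+1}\mathbf 1\{K\ge j+1\}]=2\cdot\tfrac12\,\bE[S_j\mathbf 1\{K\ge j\}]=\bE[S_j\mathbf 1\{K\ge j\}]$; hence $\bE[S_j\mathbf 1\{K\ge j\}]\equiv T_0$, and therefore
\[
\bE[B^\star]\;\ge\;\bE[S_K]\;=\;\sum_{j\ge1}\bE[S_j\mathbf 1\{K=j\}]\;=\;\sum_{j\ge1}\tfrac12\,\bE[S_j\mathbf 1\{K\ge j\}]\;=\;\sum_{j\ge1}\tfrac{T_0}{2}\;=\;\infty .
\]

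The obstacle is the optional-stopping identity, and it is exactly here that $\overline b_l\to0$ must be used — and where it is delicate, since the identity can genuinely \emph{fail} if $\overline b_l$ stays bounded away from $0$ on short lists. As $T\ge0$ one always has $\bE[S_{j+1}\mid\mathcal F_{m_j}]\le T_{m_j}$; the content is that no mass escapes to infinity, i.e.\ $\bE[T_{m_j+n}\mathbf 1\{\text{excursion alive at }n\}\mid\mathcal F_{m_j}]\to0$. The bare estimate $T_{m_j+n}\le T_{m_j}\exp(\sum_k\overline b_{l_{k-1}})$ is too weak: on short lists $\overline b_l$ may be $\Theta(1)$, and a long excursion contains geometrically many short sojourns, so $\sum_k\overline b_{l_{k-1}}$ has only a poor exponential moment. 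One must instead exploit the \emph{joint} law of excursion length, short-list occupation, and target growth: under the Cram\'er tilt $\lambda^\star=\tfrac13\ln2$ the length walk becomes drift-free with survival probability to time $n$ of order $\rho^n$, $\rho=\phi(\lambda^\star)=2^{-2/3}+2^{-5/3}=3\cdot2^{-5/3}\approx0.945$ (where $\phi(\lambda)=\tfrac12e^\lambda+\tfrac12e^{-2\lambda}$); conditioned to be alive at time $n$ the walk is typically at height $\Theta(\sqrt n)$, hence spends only $O(\sqrt n)$ steps on the bounded ``short'' set where $\overline b_l$ need not be small and $n-O(\sqrt n)$ steps where $\overline b_l=o(1)$, so $T_{m_j+n}\le T_{m_j}e^{o(n)}$ on survival, which is crushed by the factor $\rho^n$. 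Because the short-list occupation nonetheless has an exponential (not Gaussian) tail against survival, this must be executed as a genuinely joint estimate rather than a product bound; and in the residual regime where $\overline b_l$ is not small on short lists — where the identity fails — one argues instead that a target which blows up on a short list forces a comparably large bet there, so that $\bE[B^\star]=\infty$ via the excursion bets rather than via the $S_j$. Reconciling these two mechanisms into one proof is the main difficulty.
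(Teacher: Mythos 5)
Your Case (2) argument is correct and essentially self-contained: the stopped martingale $(T_{n\wedge\tau_\lambda})$ is bounded by $2\lambda$ because $B_n\le T_{n-1}<\lambda$ strictly before $\tau_\lambda$, so $\bP(T^\star\ge\lambda)\ge T_0/(2\lambda)$ and $\bE[T^\star]=\infty$, whence $B^\star\ge(\inf_l\underline{b}_l)T^\star$ finishes that case; this matches the paper, which simply quotes Grimmett--Stirzaker for $\bE[\max_n T_n]=\infty$.

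Case (1), however, is not a proof but a plan whose central step is missing. Everything hinges on the optional-stopping identity $\bE[S_{j+1}\mid\calF_{m_j},\,j<K]=T_{m_j}$ (and likewise $\bE[S_1]=T_0$ when $l_0\ge 3$). Since $T$ is a nonnegative martingale, optional stopping over an excursion gives only the inequality $\le$ for free; equality requires showing that no mass escapes, i.e.\ uniform integrability of the target stopped at the excursion's end, and this is exactly where the hypothesis $\overline{b}_l\to 0$ must be brought to bear. You do not establish it: you yourself note that the crude bound $T_{m_j+n}\le T_{m_j}\exp(\sum_k\overline{b}_{l_{k-1}})$ is too weak (the occupation of short lists, where $\overline{b}_l$ may be $\Theta(1)$, has an exponential tail that competes with the $\rho^n$ survival probability under the Cram\'er tilt), that the identity can genuinely fail in the short-list regime, and that the intended repair --- a ``genuinely joint estimate'' plus a separate dichotomy argument (``a target that blows up on a short list forces a comparably large bet there'') --- is left unexecuted and that ``reconciling these two mechanisms into one proof is the main difficulty.'' That reconciliation is the theorem in this case, so the argument has a genuine gap; note also that even a small per-excursion loss in the identity would make $\bE[S_j\indc{K\ge j}]$ decay geometrically and the series argument would collapse, so the identity cannot be weakened casually. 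For contrast, the paper avoids excursion/UI analysis altogether: it defines $f(x,l)$ as the infimum of $\bE[B^\star]$ over all constrained systems with initial target $x$ and length $l$, shows $f(x,l)=xa_l$ by scaling, derives one-step recursive inequalities for $a_l$, proves $\{a_l\}$ is strictly decreasing if $a_1<\infty$, and then uses $\overline{b}_l<\epsilon$ for large $l$ to force the differences $a_{l-2}-a_l$ to grow geometrically, contradicting $a_1<\infty$. If you want to salvage your route, you would need to prove the uniform-integrability statement (or the dichotomy replacing it) at the same level of rigor as that recursion.
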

	
	Note that $B_n/T_{n-1}$ is the bet proportion at $n$-th coup, and general $(-2,1)$-list systems correspond to the case where $\overline{b}_l=1, \underline{b}_l=0$ for any $l$. Theorem \ref{thm.constrained_bet} shows that, if the bet proportion either vanishes or is lower bounded from below as the list length $l$ grows, the largest bet size still has an infinite expectation in a fair game. The following corollary follows from Theorem \ref{thm.constrained_bet}: 
	
	\begin{corollary}\label{cor.p_half}
		For the Labouchere system with any initial list, $\bE[B^\star]=\infty$ if $p=\frac{1}{2}$. 
	\end{corollary}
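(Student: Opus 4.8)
The plan is to derive Corollary~\ref{cor.p_half} from Theorem~\ref{thm.constrained_bet}, applied in its first alternative ($\lim_{l\to\infty}\overline b_l=0$). Fix the initial list $L_0$; at the $n$-th coup the Labouch\`ere bet is $B_n=a_1+a_{l_{n-1}}$ and the target is $T_{n-1}=a_1+\cdots+a_{l_{n-1}}$, where $(a_1,\dots,a_{l_{n-1}})$ is the current list and $B_n=T_{n-1}$ is forced when $l_{n-1}\in\{1,2\}$. I would take $\underline b_l\equiv 0$, which trivially satisfies $T_{n-1}\underline b_{l_{n-1}}\le B_n$, and define
\begin{equation}\label{eq:bbar}
\overline b_l \;=\; \sup\Big\{\,\frac{a_1+a_l}{a_1+\cdots+a_l}\;:\;(a_1,\dots,a_l)\ \text{a list reachable from }L_0\,\Big\}\qquad (l\ge 3),
\end{equation}
with $\overline b_1=\overline b_2=1$. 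Since every list occurring in the play is reachable from $L_0$, the bound $B_n\le \overline b_{l_{n-1}}T_{n-1}$ holds by construction, and $\overline b_l\le 1$ for every $l$. Hence the whole statement reduces to the deterministic combinatorial estimate
\begin{equation}\label{eq:struct}
l\,(a_1+a_l)\;\le\; C(L_0)\,(a_1+\cdots+a_l)\qquad\text{for every list }(a_1,\dots,a_l)\text{ reachable from }L_0,
\end{equation}
for some finite constant $C(L_0)$: indeed \eqref{eq:struct} gives $\overline b_l\le C(L_0)/l\to 0$, so Theorem~\ref{thm.constrained_bet}(1) applies at $p=\tfrac12$ and yields $\bE[B^\star]=\infty$.

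To prove \eqref{eq:struct} I would induct along the play, exploiting two features of the Labouch\`ere dynamics: every appended term equals the sum $a_1+a_l$ of two earlier terms, so that in any reachable list the non-original terms form an increasing (indeed roughly linear) block at the tail, the largest term sits at or near the right end, and $a_1$ stays $\le\max L_0$ as long as the list is long; and every term is at least $c_0:=\min L_0>0$, so $a_1+\cdots+a_l\ge c_0 l$. With this picture, a win deletes the two outermost terms and shortens the list by two, which for a reachable list scales numerator and denominator of $l(a_1+a_l)/(a_1+\cdots+a_l)$ comparably and so does not push it above $C(L_0)$; and a loss appends $a_1+a_l$ and replaces the pair $(a_1,a_l)$ by $(a_1,a_1+a_l)$, for which one verifies that \eqref{eq:struct} is reproduced unless $a_l\ll a_1$, a regime that (because the very first loss already creates a large tail term) can persist only over the finitely many coups with $l$ in a bounded range depending on $L_0$ — and those finitely many lengths do not matter for $\overline b_l\to 0$.

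I expect the real obstacle to be establishing \eqref{eq:struct} \emph{uniformly over all reachable lists}, not just along one trajectory: a win can shorten the list while leaving a comparatively heavy interior, and alternating bursts of wins and losses can repeatedly reset the growth of the tail, so there is no naive one-step monotone invariant for the ratio $l(a_1+a_l)/(a_1+\cdots+a_l)$. The fix should be a better potential — for instance combining $a_1+\cdots+a_l\ge c_0 l$ with a lower bound on $a_l$ (or on a suitable weighted tail sum) that is linear in the number of losses since the last win — so that each coup either decreases the potential or keeps it below a threshold determined by $L_0$. Granting \eqref{eq:struct}, the reduction to Theorem~\ref{thm.constrained_bet} is immediate, proving Corollary~\ref{cor.p_half}.
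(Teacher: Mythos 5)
Your high-level route is the same as the paper's: feed the Labouch\`ere system into Theorem~\ref{thm.constrained_bet}(1) by exhibiting a length-dependent bound $\overline b_l$ on the betting proportion with $\overline b_l\to 0$. But the entire content of the corollary lies in proving such a bound, and that is exactly what your proposal leaves open: the combinatorial estimate you reduce to (namely $l\,(a_1+a_l)\le C(L_0)\,(a_1+\cdots+a_l)$ for every reachable list) is only sketched, with the difficulty explicitly acknowledged ("the fix should be a better potential"). Worse, that estimate is \emph{false}, even for the initial list $L_0=(1)$. Indeed, after $j$ losses from $(1)$ the list is $(1,1,2,\dots,j-1)$, and after $k\ge 2$ further wins it is the arithmetic ramp $(c,c+1,\dots,c+L-1)$ with $c=k-1$ and $L=j-2k+2$; since $j$ and $k$ are free, every such ramp is reachable, with $c$ arbitrarily large relative to $L$. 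Now take $m=\lceil\sqrt{L}\rceil$ additional losses: the first element stays $c$, each appended term adds $c$, so the next bet is $(m+2)c+L-1$ while the target is $Lc+L(L-1)/2+\sum_{i=1}^{m}((i+1)c+L-1)$. Letting $c\to\infty$ with $L$ fixed, the betting proportion tends to $(m+2)/(L+m(m+3)/2)$, which is of order $L^{-1/2}$ at list length about $L+\sqrt L$. So the supremum-based $\overline b_l$ you define decays like $l^{-1/2}$, not $C(L_0)/l$, and several structural claims in your sketch fail as well (e.g.\ $a_1$ need not stay below $\max L_0$ for long lists, and the regime $a_l\asymp a_1$ with both huge occurs at arbitrarily large lengths, not only in a bounded range of $l$).

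The saving grace is that Theorem~\ref{thm.constrained_bet} only needs $\overline b_l\to 0$, and an $O(l^{-1/2})$ bound does hold --- but proving it is precisely the paper's work, which you have not replaced. The paper first reduces to a single-number initial list (with positive probability the list hits length one, and conditionally the game restarts from a singleton, so infiniteness of the conditional expectation suffices); this reduction is what makes a clean invariant available, namely that all subsequent lists are ``good'' (sorted, with non-decreasing increments bounded by $a_1$), and for good lists of length $l$ one gets $B_n/T_{n-1}\le\sqrt{2/l}+2/l$ --- a bound that the example above shows is essentially sharp. Without the reduction to a singleton (you work with reachable lists from an arbitrary $L_0$, whose original segment need not be sorted) and without a proved decay estimate, the proposal does not constitute a proof; it restates the reduction step of the paper's argument while omitting, and in its quantitative form contradicting, the key lemma.
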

	
	Combining Corollaries \ref{cor.p_not_half} and \ref{cor.p_half}, we conclude that for the Labouchere system, $\bE[B^\star]=\infty$ if and only if $p\le \frac{1}{2}$, solving the open conjecture in \cite{grimmett2001one}. It also follows directly from Theorems \ref{thm.general_list_system} and \ref{thm.constrained_bet} that for the Fibonacci system, $\bE[B^\star]=\infty$ if and only if $p\le \frac{1}{2}$, recovering the result in \cite{ethier2010doctrine}. Generalizing the arguments to $(-1,0)$-list systems, this also recovers the famous St. Petersburg paradox that $\bE[B^\star]=\infty$ in the martingale system under $p=\frac{1}{2}$. 
	
	Based on Theorem \ref{thm.constrained_bet}, a natural question would be that whether $\bE[B^\star]=\infty$ holds in any $(-2,1)$-list systems. We have the following partial result: 
	\begin{theorem}\label{thm.moment_bounds}
		For any $(-2,1)$-list system and  $\epsilon>0$, the following holds under $p=\frac{1}{2}$:
		\begin{align*}
		\bE\left[B^\star(1\vee \log B^\star)^{-(1+\epsilon)}\right] < \infty, \qquad \bE[B^\star (1\vee \log B^\star)] = \infty. 
		\end{align*}
	\end{theorem}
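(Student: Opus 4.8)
The plan is to reduce everything to a single, cleanly-behaved ``extremal'' list system and then estimate the tail of $B^\star$ for that system directly. Concretely, since the betting size $B_n$ is only constrained by $0\le B_n\le T_{n-1}$ (together with the termination rule), among all list systems the one that maximizes $B^\star$ stochastically — or at least dominates its tail up to constants — should be the one that always bets the maximum allowed, i.e. $B_n=T_{n-1}$, which is exactly the all-in system; there the list length is irrelevant and $T_n$ is just the absolute value of a biased random walk killed at a cancellation event. I expect that the recursive representation of the optimal system alluded to in the abstract (and presumably developed earlier in the full paper around Theorems~\ref{thm.general_list_system}--\ref{thm.constrained_bet}) gives a clean stochastic upper bound $B^\star \le_{\mathrm{st}} c\,M$ and a matching lower bound $B^\star \ge_{\mathrm{st}} c'\,M'$, where $M, M'$ are suprema of a multiplicative random walk run for a geometric-type number of steps. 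So step one is to set up that comparison and record the precise meaning of ``optimal''.

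Step two is the tail computation for the upper bound. Writing the target as a product of i.i.d.\ factors that are $(1+\beta)$ on a loss and something smaller on a win, with $p>\tfrac12$, the process $\log T_n$ is a random walk with negative drift, observed until an independent-ish killing time $N$ which, by the cited results of Downton/Ethier, has exponential tails when $p>\tfrac13$. The running maximum $\log B^\star$ is then governed by the all-time maximum of a negative-drift walk, whose stationary tail is exponential: $\Prob[\log B^\star > t]\le C e^{-\gamma t}$ for some $\gamma>0$. That would give finiteness of every polynomial moment, which is far stronger than claimed — so the catch must be that the ``$+1$ on a loss, $-2$ on a win'' length dynamics forces the betting \emph{fraction} $B_n/T_{n-1}$ to stay bounded away from $0$ only in one regime, and in the other the relevant comparison system is the slowest-shrinking one, where the per-step multiplicative factor after a win is close to $1$. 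In that boundary regime $\log B^\star$ behaves like the maximum of a random walk with drift $\to 0$ observed for $\approx N$ steps, i.e.\ like $\sqrt{N}$ up to logs, and with $\Expect[N^{k}]<\infty$ for all $k$ this still gives all polynomial moments — so the real mechanism producing the $\log$-threshold has to be located more carefully. The honest plan is therefore: identify, via the recursive/optimal representation, the exact extremal dynamics; show $\log B^\star$ is, up to additive $O(1)$, the maximum over $n\le N$ of $\sum_{i\le n} X_i$ for i.i.d.\ increments $X_i$ whose positive part is heavy enough that $\Expect[(X_i^+)]<\infty$ but $\Expect[(X_i^+)^{1+\epsilon}]$ just barely fails to control things after the geometric number of steps; equivalently, show $B^\star$ behaves like a geometric maximum of a random variable with a power-law/exponential-of-exponential tail whose $\log$ is exactly critical.

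Step three is to turn the distributional description into the two stated estimates. Given $\Prob[B^\star > x] \asymp (\log x)^{-1}\,\Prob[\text{single-step factor}>\log x]$-type behavior — i.e.\ a tail of the form $\Prob[B^\star>x]\asymp \frac{1}{x}\cdot\frac{1}{\log x}$ (heuristically: $B^\star$ is a maximum over $\Theta(\log B^\star)$ ``scales'', each contributing an order-$1/x$ chance) — the first claim follows by the layer-cake formula,
\[
\Expect\!\left[\frac{B^\star}{(\log B^\star)^{1+\epsilon}}\right]
= \int_0^\infty \Prob\!\left[\frac{B^\star}{(\log B^\star)^{1+\epsilon}} > t\right]\,dt
\lesssim \int^\infty \frac{dt}{(\log t)^{2+\epsilon}} < \infty,
\]
while $\Expect[B^\star\log B^\star] \gtrsim \int^\infty \frac{\log t}{t\,\log t}\,dt = \int^\infty \frac{dt}{t}=\infty$; the key inputs are a sharp two-sided tail $\Prob[B^\star > x] \asymp \frac{1}{x\log x}$, and both bounds then reduce to elementary integral comparisons. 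The main obstacle is clearly step two: establishing the precise $\frac{1}{x\log x}$-type tail (as opposed to an exponential or a clean power law) — this requires pinning down the extremal list dynamics exactly and showing that the running maximum genuinely sees $\Theta(\log x)$ roughly-independent opportunities to exceed level $x$, which is where the ``history-dependent'' nature of the list (the obstruction noted for Labouch\`ere vs.\ Fibonacci) re-enters and must be handled via the recursive representation rather than by a naive random-walk comparison. The lower bound $\Expect[B^\star\log B^\star]=\infty$ may be more robust: it should follow from exhibiting, with probability $\gtrsim \frac{1}{x\log x}$, a specific ``long losing run then settle'' scenario forcing $B^\star>x$, analogous to the construction used for the $p=\tfrac12$ case in Theorem~\ref{thm.constrained_bet}, extended to $p>\tfrac12$ by paying an $e^{-c\,\text{(run length)}}$ price that costs only an extra $\log$ factor.
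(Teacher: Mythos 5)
Your proposal is a plan with an acknowledged central gap rather than a proof: everything rests on (i) a stochastic domination of $B^\star$ by an ``all-in''/extremal list system and (ii) a sharp two-sided tail $\bP(B^\star>x)\asymp \frac{1}{x\log x}$, and neither is established --- you yourself identify step two as the main obstacle. The domination claim is unproved and doubtful, and even granted, a two-sided tail for one extremal system cannot deliver the theorem, which asserts a moment \emph{upper} bound for every list system and a moment \emph{divergence} for every list system; those require, respectively, a uniform upper tail and a per-system lower-bound argument, not an asymptotic for a single dominating system. Moreover the claimed $\frac{1}{x\log x}$ tail is not the right mechanism: a plain $\Theta(1/x)$ tail (e.g.\ the all-in system in the fair case) already produces both conclusions. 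Finally, the ``$p>\frac{1}{2}$'' in the printed statement is evidently a typo for the fair game $p=\frac{1}{2}$: the paper's own proof uses the martingale property of $\{T_n\}$ and the discussion after the theorem speaks of a fair game, and at literal $p>\frac{1}{2}$ the second claim is false (for the all-in system with $l_0=1$ one has $B^\star=2^K T_0$ with $\bP(K=k)=(1-p)^k p$, so $\bE[B^\star\log B^\star]<\infty$ once $2(1-p)<1$). Your observation that your estimate is ``far stronger than claimed'' is exactly this inconsistency, but you neither resolved it nor completed a proof of either assertion.

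The paper's argument is much shorter and needs no extremal-system analysis. For the first claim: at $p=\frac{1}{2}$ the target $\{T_n\}$ is a nonnegative martingale, so Doob's maximal inequality gives $\bP\left(\max_{0\le m\le n} T_m\ge\lambda\right)\le T_0/\lambda$; since $B_m\le T_{m-1}$, this one-sided $1/\lambda$ bound alone yields $\bP\left(B^\star(\log B^\star)^{-(1+\epsilon)}\ge\lambda\right)\le \frac{T_0}{C\lambda(\log\lambda)^{1+\epsilon}}$, which is integrable by the layer-cake formula --- no tail asymptotics are needed. For the second claim, instead of a pathwise ``long losing run'' construction, the paper couples $N$ with $B^\star$ through the Fenchel--Young inequality $NB^\star\le e^{cN}+\frac{1}{c}B^\star\left(\log\frac{B^\star}{c}-1\right)$, then uses $\bE[e^{cN}]<\infty$ for small $c>0$ (exponential tails of $N$ for $p>\frac{1}{3}$, Lemma \ref{lemma.N_asymptotics}) together with $\bE[NB^\star]\ge\bE\left[\sum_{n=1}^N B_n\right]=\infty$ from \cite{grimmett2001one}; these force $\bE[B^\star\log B^\star]=\infty$. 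The recursive/optimal-system machinery you invoke is used in the paper only for Theorem \ref{thm.constrained_bet}, not here, so your outline would need to be rebuilt around these two elementary inequalities to become a proof.
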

	Theorem \ref{thm.moment_bounds} shows that, the moment $\bE[(B^\star)^\alpha]$ always has a phase transition at $\alpha=1$ in a fair game. However, the exact answer for $\alpha=1$ is still unknown, and we leave it as a conjecture: 
	\begin{conjecture}
		For any $(-2,1)$-list systems, $\bE[B^\star]=\infty$ under $p=\frac{1}{2}$. 
	\end{conjecture}
	
	\section{Proof of Theorems \ref{thm.general_list_system} and \ref{thm.moment_bounds}}
	In this section, we first prove Theorem \ref{thm.moment_bounds}, and then apply Theorem \ref{thm.moment_bounds} to proving Theorem \ref{thm.general_list_system}. 
	\subsection{Proof of Theorem \ref{thm.moment_bounds}}
	
	We make use of the asymptotic tail behavior of the stopping time $N$ in the $(-2,1)$-list system. 
	\begin{lemma}\cite{ethier2008absorption}\label{lemma.N_asymptotics}
		For $p > \frac 13$, we have
		\begin{align*}
		\bP_{l_0} (N \geq n+1) \sim D_{l_0} (n) n^{-\frac 32} \kappa^\frac n3
		\end{align*}
		where $l_0$ is the length of the initial list, $D_{l_0} (n)$ is a constant only depending  on $l_0$ and $n\pmod 3$, and $\kappa \triangleq \frac{27} {4} p (1-p)^2 <1 $.
	\end{lemma}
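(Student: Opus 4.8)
The plan is to reduce the analysis of $N$ to a first–passage problem for a simple random walk, write down the exact distribution by a ballot/cycle‑lemma argument, and then read off the asymptotics by Stirling's formula. The first point is that $N$ depends only on the win/loss sequence: the length evolves by $+1$ on a loss and to $(l_{n-1}-2)_+$ on a win, so the process terminates exactly when a win occurs while the current length is $1$ or $2$. Introduce $Y_n \triangleq \sum_{i=1}^n \xi_i$ with $\xi_i$ i.i.d., $\bP(\xi_i=1)=1-p$ (loss) and $\bP(\xi_i=-2)=p$ (win). Then $l_n = l_0 + Y_n$ for every $n<N$ (the truncation $(\cdot)_+$ has not yet been triggered), and
\[
N \;=\; \inf\{\, n\ge 1 : l_0 + Y_n \le 0 \,\},
\]
with $l_0 + Y_N \in \{0,-1\}$, since the only down‑jumps have size $2$. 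Counting $d$ wins and $u=n-d$ losses on $\{N=n\}$ forces $u-2d\in\{-l_0,-l_0-1\}$, i.e.\ $d = n/3 + O(1)$ and $n\equiv -l_0$ or $-l_0-1 \pmod 3$; thus $\bP_{l_0}(N=n)$ is supported on two of the three residue classes mod $3$, which is the source of the $n\bmod 3$ dependence of $D_{l_0}$.

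Next I would write down the exact distribution. Reversing time (the increments are i.i.d., hence exchangeable) and flipping signs turns $\{N=n\}$ into a count of lattice paths with steps $-1$ and $+2$ — a walk skip‑free downwards — of length $n$ that stay nonnegative and end at a prescribed height $\asymp l_0$. The cycle lemma (Dvoretzky--Motzkin), equivalently the generalized ballot theorem used in \cite{ethier2008absorption}, then gives
\[
\bP_{l_0}(N=n) \;=\; \frac{a_{l_0}(n)}{n}\binom{n}{d}\, p^{d}(1-p)^{n-d},
\]
where $d=d(n,l_0)$ is the integer above and $a_{l_0}(n)$ is an explicit factor, linear in $l_0$ and eventually constant along each residue class, which accounts for the height and the unit overshoot. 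Since $d/n\to 1/3$, Stirling's formula gives $\binom{n}{d}\sim c\,n^{-1/2}(27/4)^{n/3}$ along the relevant subsequences (with $c$ depending on $l_0$ and $n\bmod 3$ through the $O(1)$ shift in $d$), hence
\[
\binom{n}{d}\,p^{d}(1-p)^{n-d} \;\sim\; c\,n^{-1/2}\Big(\tfrac{27}{4}\,p(1-p)^2\Big)^{n/3} \;=\; c\,n^{-1/2}\,\rho^{n/3}.
\]
Multiplying by $a_{l_0}(n)/n$ yields $\bP_{l_0}(N=n)\sim \widetilde D_{l_0}(n)\,n^{-3/2}\rho^{n/3}$ with $\widetilde D_{l_0}(n)$ depending only on $l_0$ and $n\bmod 3$. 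Because $\tfrac{27}{4}p(1-p)^2$ attains its maximum $1$ at $p=1/3$ and is $<1$ for $p\in(1/3,1)$, we have $\rho<1$, so the tail $\bP_{l_0}(N\ge n+1)=\sum_{m\ge n+1}\bP_{l_0}(N=m)$ is a convergent, geometric‑type series dominated by its first few terms; summing it reproduces $\bP_{l_0}(N\ge n+1)\sim D_{l_0}(n)\,n^{-3/2}\rho^{n/3}$, the constant again depending only on $l_0$ and $n\bmod 3$ (via which residues are populated just past $n$).

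An equivalent and arguably cleaner route is analytic. The bivariate generating function of the barrier‑constrained walk has kernel $1 - s\big((1-p)x + px^{-2}\big)$; clearing denominators gives the cubic $s(1-p)x^3 - x^2 + sp = 0$, with discriminant $sp\big(4 - 27\,s^3\,p(1-p)^2\big)$. The probability generating function $F_{l_0}(s)=\bE_{l_0}[s^N]$ is an algebraic function built from the roots of this cubic, and its dominant singularity is exactly the point where two of these roots collide, i.e.\ where the discriminant vanishes: $s_c = \big(\tfrac{27}{4}\,p(1-p)^2\big)^{-1/3} = \rho^{-1/3}$. There it is a square‑root branch point, and the Flajolet--Odlyzko transfer theorem converts this into $\bP_{l_0}(N=n)\sim \widetilde D_{l_0}(n)\,n^{-3/2}s_c^{-n} = \widetilde D_{l_0}(n)\,n^{-3/2}\rho^{n/3}$; the tail generating function $(1-F_{l_0}(s))/(1-s)$ has the same dominant singularity (the apparent pole at $s=1$ is removable because $F_{l_0}(1)=1$ for $p>1/3$).

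I expect the main obstacle to be bookkeeping rather than any deep idea. One must (i) handle the unit overshoot at the barrier together with the small‑length boundary rule in Definition~\ref{def.list_system}, so that the random‑walk reduction and the ballot count are exactly correct (not merely correct up to lower order); and (ii) control all the singularities on the circle $|s|=s_c$ — equivalently, all the populated residue classes mod $3$ — in order to conclude that the limiting constant depends on $n$ only through $n\bmod 3$. Both steps are routine for this type of problem but require care, and together they are what makes the statement of the lemma (with its $n\bmod 3$–dependent prefactor) precise.
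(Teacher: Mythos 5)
The paper does not actually prove this lemma --- it is imported verbatim from \cite{ethier2008absorption} --- and your primary argument (reduce $N$ to the first passage below $0$ of a walk with i.i.d.\ steps $+1$ and $-2$, count the paths by the generalized ballot/cycle lemma, then apply Stirling and sum the tail) is exactly the route of that cited reference; the skeleton checks out, including the residue-class bookkeeping, the overshoot set $\{0,-1\}$, the discriminant $sp\bigl(4-27s^3p(1-p)^2\bigr)$ in your alternative kernel-method route, and $\rho<1$ for $p>\tfrac13$. The only deferred items --- the exact form of the prefactor $a_{l_0}(n)$ and the careful treatment of the overshoot case in the cycle-lemma count --- are genuine but routine bookkeeping, as you note.
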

	Based on Lemma \ref{lemma.N_asymptotics}, we are about to prove Theorem \ref{thm.moment_bounds}. We first show that $\bE\left[B^\star(1\vee\log B^\star)^{-(1+\epsilon)}\right] < \infty$. Under $p=\frac{1}{2}$, the target sequence $\{T_n\}$ is a martingale, with $\bE[T_n]=T_0$. By Doob's maximal inequality, for any $\lambda>0$, 
	\begin{align*}
	\bP\left(\max_{0\le m\le n} T_m \ge \lambda \right) \le \frac{\bE[T_n]}{\lambda} = \frac{T_0}{\lambda}. 
	\end{align*}
	Note that $B_n\le T_{n-1}$, for $\lambda\ge 2$ we therefore have
	\begin{align*}
	\bP\left(\max_{1\le m\le n} B_m(1\vee\log B_m)^{-(1+\epsilon)} \ge \lambda \right) &=
	\bP\left(\max_{1\le m\le n} B_m \ge C\lambda(\log \lambda)^{1+\epsilon} \right) \\
	&\le \bP\left(\max_{0\le m\le n-1} T_m \ge C\lambda(\log \lambda)^{1+\epsilon} \right) \\ 
	&\le \frac{T_0}{C\lambda (\log \lambda)^{1+\epsilon}}
	\end{align*}
	where $C>0$ is some universal constant. As a result,
	\begin{align*}
	\bE\left[\max_{1\le m\le n} B_m(1\vee\log B_m)^{-(1+\epsilon)} \right] &= \int_0^\infty \bP\left(\max_{1\le m\le n} B_m(1\vee\log B_m)^{-(1+\epsilon)} \ge \lambda \right) d\lambda \\
	&\le 2 +  \int_2^\infty \frac{T_0}{C\lambda (\log \lambda)^{1+\epsilon}} d\lambda < \infty
	\end{align*}
	where in the last step we have used that $$\int_2^\infty \frac{dx}{x(\log x)^{1+\epsilon}}<\infty.$$ 
	Choosing $n\to\infty$, by monotone convergence we arrive at $\bE\left[B^\star(1\vee\log B^\star)^{-(1+\epsilon)}\right] < \infty$. 
	
	Now we show that $\bE[B^\star(1\vee \log B^\star)] = \infty$. We recall the following Fenchel--Young inequality: 
	\begin{align*}
	xy \le \psi(x) + \psi^\star(y)
	\end{align*}
	where $\psi(\cdot)$ is convex, and $\psi^\star(y)=\sup_{x}(xy-\psi(x))$ is the Fenchel--Legendre dual of $\psi$. For $\psi(x) = e^{cx}$ with $c>0$, we have 
	\begin{align*}
	\psi^\star(y) = \sup_{x\in \bR} \left(xy - e^{cx} \right)= \frac{y}{c}\left(\log \frac{y}{c} - 1\right), 
	\end{align*}
	and therefore
	\begin{align*}
	\bE[NB^\star] \le \bE[\psi(N)] + \bE[\psi^\star(B^\star)] = \bE[e^{cN}] + \frac{1}{c}\bE\left[B^\star \left(\log \frac{B^\star}{c} - 1\right) \right]. 
	\end{align*}
	By Lemma \ref{lemma.N_asymptotics}, for $c>0$ sufficiently small we have $\bE[e^{cN}]<\infty$. Moreover, \cite{grimmett2001one} shows that
	\begin{align*}
	\bE[NB^\star] \ge \bE\left[\sum_{n=1}^N B_n\right] = \infty. 
	\end{align*}
	A combination of the previous two inequalities yields $\bE[B^\star(1\vee \log B^\star)]=\infty$. 
	
	\subsection{Proof of Theorem \ref{thm.general_list_system} and Corollary \ref{cor.p_not_half}}
	Now we prove Theorem \ref{thm.general_list_system} using Theorem \ref{thm.moment_bounds} and change of measure. 
	
	Fix any $p>\frac{1}{2}$, let $P$ be the probability measure over the betting process under winning probability $p$, and $Q$ be the counterpart under winning probability $\frac{1}{2}$. Note that for any sample path $\omega$ with stopping time $N=n$, there must be $\frac{n}{3}+c$ wins and $\frac{2n}{3}-c$ losses, where $c$ is a constant depending only on the initial length $l_0$ and $n\pmod 3$. As a result, the likelihood ratio is
	\begin{align*}
	\frac{dP}{dQ}(\omega) = \frac{p^{\frac{n}{3}+c} (1-p)^{\frac{2n}{3}-c}}{2^{-n}} = \left(\frac{p}{1-p}\right)^c\cdot \left(\frac{p(1-p)^2}{\frac{1}{2}(1-\frac{1}{2})^2}\right)^{\frac{n}{3}} \le C\rho^n
	\end{align*}
	where $C>0, \rho\in (0,1)$ are numerical constants independent of $n$, and we have used that the function $p\mapsto p(1-p)^2$ is strictly decreasing in $p\in [\frac{1}{3},1]$. As a result, 
	\begin{align*}
	\bE_P[B^\star] = \bE_Q \left[ B^\star\cdot \frac{dP}{dQ} \right] \le C\cdot \bE_Q [\rho^N B^\star]. 
	\end{align*}	
	
	Since $T_n\le T_{n-1}+B_n \le 2T_{n-1}$ in any list system, $B^\star\le \max_{0\le n\le N}T_n \le T_0\cdot 2^N$, and therefore 
	\begin{align*}
	\bE_Q [\rho^N B^\star] \le T_0^{\epsilon}\cdot \bE_Q [(\rho 2^\epsilon)^N (B^\star)^{1-\epsilon}]
	\end{align*}
	for any $\epsilon>0$. Choosing $\epsilon>0$ small enough such that $\rho 2^\epsilon<1$, by Theorem \ref{thm.moment_bounds} we conclude that $\bE_P[B^\star]<\infty$. 
	
	For $p\in (\frac{1}{3},\frac{1}{2})$, we use the same argument to obtain $\frac{dP}{dQ} \ge C\rho^N$ for some $\rho>1$. Then
	\begin{align*}
	\bE_P[B^\star] \ge C\cdot \bE_Q [\rho^N B^\star] \ge CT_0^{-\epsilon} \cdot \bE_Q [(\rho 2^{-\epsilon})^N (B^\star)^{1+\epsilon}], 
	\end{align*}
	and by choosing $\epsilon>0$ small enough, Theorem \ref{thm.moment_bounds} yields $\bE_P[B^\star] = \infty$. 
	
	Finally, for $p\le \frac{1}{3}$, we have $\bE[\sup_{0\le n< N}l_n]=\infty$ by the theory of asymmetric random walk. Hence, by assumption we have
	\begin{align*}
	\bE[B^\star] \ge c_1\bE[\sup_{0\le n<N}l_n] + c_2 = \infty
	\end{align*}
	as desired. The proof of Theorem \ref{thm.general_list_system} is completed. 
	
	As for Corollary \ref{cor.p_not_half}, it suffices to verify that the condition $B_{n}\ge c_1l_{n-1}+c_2$ holds for the Labouchere system. Let $a>0$ be the minimum number in the initial list $L_0$, a simple induction on $n$ yields that $B_n\ge a(l_{n-1}-l_0)_+$, which shows that the condition is fulfilled with $c_1=a>0, c_2=-al_0$. 
	
	\section{Proof of Theorem \ref{thm.constrained_bet} and Corollary \ref{cor.p_half}}
	In this section, we first use a recursive representation of the optimal list system to prove Theorem \ref{thm.constrained_bet}. Then we investigate the specific properties of the Labouchere system and show that the condition in Theorem \ref{thm.constrained_bet} holds, thereby proving Corollary \ref{cor.p_half}. 
	
	\subsection{Proof of Theorem \ref{thm.constrained_bet}}
	If $\inf_l \underline{b}_l\ge c>0$, we have $B^\star \ge c\max_{0\le n\le N} T_n$, which has an infinite expectation \cite{grimmett2001one}. Now we assume that $\lim_{l\to\infty} \overline{b}_l=0$ and prove Theorem \ref{thm.constrained_bet} by contradiction. We first introduce the following definition: 
	\begin{definition}\label{def.optimal_strategy}
		For any $x>0$ and $l\in \{1,2,\cdots\}$, we define $f(x,l)$ to be the infimum of $\bE[B^\star]$ over all possible $(-2,1)$-list systems with initial target $x$ and initial length $l$, such that $B_n\le \overline{b}_{l_{n-1}}T_{n-1}$ for any $n$. 
	\end{definition}
	
	Definition \ref{def.optimal_strategy} considers an \emph{optimal} $(-2,1)$-list system with initial target $x$ and initial length $l$, where optimality is measured in terms of a smallest expectation of the largest bet size $B^\star$. The quantity $f(x,l)\in \bR_+\cup \{+\infty\}$ is the corresponding expectation, and it is well-defined even if the optimal list system does not exist. The next lemma presents recursive relations between $f(x,l)$ with different $l$. 
	
	\begin{lemma}\label{lemma.basic_f}
		There exists some sequence $\{a_l\}$ taking value in $\bR_+\cup\{+\infty\}$ such that $f(x,l)=xa_l$ for any $x>0$. Moreover, the sequence $\{a_l\}$ satisfies the following inequalities: 
		\begin{align*}
		a_l &\ge \min_{b\in [0,\overline{b}_l]} \frac{\max\{b, (1-b)a_{l-2}\} + \max\{b, (1+b)a_{l+1}\} }{2}, \qquad l\ge 3 \\
		a_1 &\ge a_2 + \frac{1}{2} \ge a_3 + 1. 
		\end{align*}
	\end{lemma}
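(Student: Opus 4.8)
The plan is to first establish the scaling identity $f(x,l) = x\,a_l$ and then derive the recursive inequalities by conditioning on the outcome of the first coup. For the scaling, I would argue that the map $(B_n)_n \mapsto (\alpha B_n)_n$ is a bijection between list systems with initial target $x$ and those with initial target $\alpha x$ (for $\alpha>0$): the update rules for $T_n$ are linear, the length sequence $l_n$ is unaffected by rescaling bets, and the constraint $B_n \le \overline{b}_{l_{n-1}} T_{n-1}$ is scale-invariant since both sides scale by $\alpha$. Under this bijection $B^\star$ scales by $\alpha$, hence $\bE[B^\star]$ scales by $\alpha$, so taking infima gives $f(\alpha x, l) = \alpha f(x, l)$. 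Setting $a_l \triangleq f(1,l) \in \bR_+\cup\{+\infty\}$ yields $f(x,l) = x a_l$. The termination condition forces $B_1 = T_0 = x$ when $l \in \{1,2\}$, giving the base cases directly: $a_2 = 1$ would be wrong in general, so instead I track the stated chain of inequalities rather than exact values — $a_1$ comes from one forced bet of size $x$ followed by an empty list, while the inequalities $a_1 \ge a_2 + \tfrac12 \ge a_3 + 1$ I would obtain by comparing an $l$-list to an $(l-1)$-list run where the first bet is handled suboptimally; I expect these low-length cases to need a small separate computation using that from length $2$ one bet of $x$ empties the list and from length $3$ the first bet $b \in [0,\overline b_3]$ leads to lengths $1$ or $4$.

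For the main recursion with $l \ge 3$: fix $\varepsilon > 0$ and take a list system achieving $\bE[B^\star] \le f(x,l) + \varepsilon = x a_l + \varepsilon$. Let $B_1 = bx$ with $b \in [0, \overline{b}_l]$ (here $b$ may be random, but by a further conditioning/averaging argument I can reduce to deterministic $b$, or simply carry the expectation over $b$ through a convexity-free pointwise bound). After the first coup, with probability $\tfrac12$ we win: the new target is $(1-b)x$, the new length is $l - 2 \ge 1$, and the remaining process is itself a valid list system on these parameters, so its conditional $\bE[B^\star_{\ge 2}] \ge (1-b)x\, a_{l-2}$; the overall $B^\star$ on this branch is at least $\max\{bx, (1-b)x\, a_{l-2}\}$ because $B^\star \ge B_1 = bx$ always and $B^\star \ge B^\star_{\ge 2}$. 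With probability $\tfrac12$ we lose: the new target is $(1+b)x$, the new length is $l+1$, and the same reasoning gives a contribution at least $\max\{bx, (1+b)x\, a_{l+1}\}$. Averaging the two branches and dividing by $x$:
\begin{align*}
a_l + \frac{\varepsilon}{x} \ge \frac{\max\{b, (1-b)a_{l-2}\} + \max\{b, (1+b)a_{l+1}\}}{2} \ge \min_{b'\in[0,\overline b_l]} \frac{\max\{b', (1-b')a_{l-2}\} + \max\{b', (1+b')a_{l+1}\}}{2}.
\end{align*}
Letting $\varepsilon \downarrow 0$ gives the claim.

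The main obstacle I anticipate is the measurability/randomization issue: an optimal (or near-optimal) strategy may randomize the first bet $B_1$ over $[0, \overline b_l x]$, so I cannot literally pick a single $b$. The fix is to observe that the right-hand side of the recursion, as a function of $b$, is handled by taking the minimum over $b$ — so conditioning on $B_1 = bx$ and applying the branch bound pointwise in $b$, then taking expectation over $B_1$, only helps us (each realization is bounded below by the $\min_b$ expression). A secondary subtlety is justifying "the remaining process after coup $1$ is a valid list system with the stated initial data": this needs the Markov-like structure of Definition \ref{def.list_system} — the future bets still satisfy $B_n \le \overline b_{l_{n-1}} T_{n-1}$ and the termination rule, so conditioned on the first-coup outcome and on the history being irrelevant to future constraints, the tail is an admissible system and its cost is bounded below by $f(\text{new target}, \text{new length})$. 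Finally, one must handle $a_{l-2}, a_{l+1} = +\infty$ gracefully: then the corresponding $\max$ is $+\infty$ unless the coefficient $(1-b)$ or $(1+b)$ vanishes, which it does not for $b \in [0,1)$, so the inequality holds trivially (both sides $+\infty$) except possibly at $b$ forced by $\overline b_l$, which is consistent with the convention.
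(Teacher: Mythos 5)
Your proposal is correct and takes essentially the same route as the paper: the scaling bijection gives $f(x,l)=x a_l$, and conditioning on the first coup with your ``max of two lower bounds'' step (the paper phrases it as Jensen applied to the convex map $t\mapsto\max\{b,t\}$), then taking the infimum over systems and the minimum over $b\in[0,\overline{b}_l]$, is exactly the paper's derivation of the recursion for $l\ge 3$. For the low-length cases, the clean statement of the ``small separate computation'' you defer is: for $l\in\{1,2\}$ the termination condition forces $B_1=x$, and the loss branch has target $2x$ at length $l+1$, so $a_l\ge\frac{1+2a_{l+1}}{2}=a_{l+1}+\frac{1}{2}$, giving $a_1\ge a_2+\frac{1}{2}\ge a_3+1$ (the paper likewise leaves this as ``analogous'').
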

	\begin{proof}
		When the initial target $x$ is scaled by $\lambda>0$, we may always scale all bet sizes by $\lambda$ to arrive at a new list system with the initial target $\lambda x$, and vice versa. Hence, $f(x,l)$ is proportional to $x$, and $f(x,l)=xa_l$. 
		
		For $l\ge 3$ and any $(-2,1)$-list system, let $b\in [0,\overline{b}_l]$ be any bet size at the first coup with initial target $T_0=1$ and initial length $l$. Let $B_1^\star, B_2^\star$ be the largest bet sizes (excluding the first bet) after winning/losing the first coup, respectively. Then by definition of $f(x,l)$, we have
		\begin{align*}
		\bE B_1^\star &\ge f(1-b, l-2) = (1-b)a_{l-2}, \\
		\bE B_2^\star &\ge f(1+b, l+1) = (1+b)a_{l+1}. 
		\end{align*}
		Note that $B^\star $ is either $\max\{b, B_1^\star\}$ or $\max\{b, B_2^\star\}$, we have
		\begin{align*}
		\bE[B^\star] &= \frac{\bE\max\{b,B_1^\star\} + \bE\max\{b,B_2^\star\}}{2} \\
		&\ge \frac{\max\{b,\bE B_1^\star\} + \max\{b,\bE B_2^\star\}}{2} \\
		&\ge \frac{\max\{b,(1-b)a_{l-2}\} + \max\{b,(1+b)a_{l+1}\}}{2}
		\end{align*}
		where the first inequality is due to the convexity of $x\mapsto \max\{b,x\}$. Note that this inequality holds for any list systems, taking infimum over the LHS gives the desired inequality for $l\ge 3$. The other inequalities for $l\le 2$ can be established analogously. 
	\end{proof}
	
	Based on Lemma \ref{lemma.basic_f}, we may investigate more properties of $a_l$. If $a_1=\infty$, it is obvious that $a_l=\infty$ for any $l\in\naturals$ (since any initial list may evolve into length one with non-zero probability), and Theorem \ref{thm.constrained_bet} holds. Next we show that $a_1<\infty$ is impossible. Assume by contradiction that $a_1<\infty$, we will have the following lemma. 
	\begin{lemma}\label{lemma.decreasing}
		If $a_1<\infty$, the sequence $\{a_l\}$ will be strictly decreasing, i.e., $a_1>a_2>a_3>\cdots$. 
	\end{lemma}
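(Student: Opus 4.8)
The plan is to derive the chain $a_1>a_2>a_3>\cdots$ inductively from the recursive bounds in Lemma~\ref{lemma.basic_f}. The base of the induction is already contained in the last line of that lemma: since $a_1 \ge a_2 + \tfrac12 \ge a_3 + 1$ and $a_1 < \infty$ by assumption, we read off $a_2, a_3 < \infty$ and $a_1 > a_2 > a_3$. It then remains to show that, for every $l \ge 3$, strict monotonicity of $a_1, \dots, a_l$ forces $a_l > a_{l+1}$ (and, as a by-product, $a_{l+1} < \infty$); feeding this back into the induction yields the full chain.

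The key step I would isolate is the implication, valid for $l\ge 3$: \emph{if $a_{l-2} < \infty$ and $a_{l-2} > a_l$, then $a_{l+1} < a_l$.} In the inductive step this is applied with $a_{l-2} > a_{l-1} > a_l$ already known and $a_1,\dots,a_l$ all finite, so its hypotheses are met. To prove it I would argue by contradiction and assume $a_{l+1} \ge a_l$, the value $a_{l+1} = \infty$ being allowed (which is precisely how the finiteness conclusion comes out for free). Let $g(b)$ denote the quantity inside the $\min$ in the $l$-th recursive inequality of Lemma~\ref{lemma.basic_f}. Bounding the outer maxima from below by $\max\{b,(1-b)a_{l-2}\} \ge (1-b)a_{l-2}$ and $\max\{b,(1+b)a_{l+1}\} \ge (1+b)a_{l+1} \ge (1+b)a_l$, one obtains, for every $b \in [0,\overline b_l] \subseteq [0,1]$ with $b < 1$,
\[
g(b) \ge \frac{(1-b)a_{l-2} + (1+b)a_{l+1}}{2} > \frac{(1-b)a_l + (1+b)a_l}{2} = a_l ,
\]
the strict inequality being exactly the hypothesis $a_{l-2} > a_l$ together with $1 - b > 0$. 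Since $b\mapsto g(b)$ is continuous on the compact interval $[0,\overline b_l]$ (a finite maximum of affine functions, or identically $+\infty$ when $a_{l+1} = \infty$), it attains its minimum, so $a_l \ge \min_b g(b) > a_l$ --- a contradiction.

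The one point that genuinely needs care, and which I expect to be the main obstacle, is the boundary value $b = 1$, admissible when $\overline b_l = 1$ (\ie a full bet is allowed at length $l$): there the term $(1-b)a_{l-2}$ supplying the strictness degenerates to $0$. I would patch this by using $\max\{1,(1-1)a_{l-2}\} = \max\{1,0\} = 1$ to get
\[
g(1) = \frac{1 + \max\{1, 2a_{l+1}\}}{2} \ge \frac{1 + 2a_{l+1}}{2} \ge \frac{1 + 2a_l}{2} > a_l ,
\]
so that $g(b) > a_l$ holds on all of $[0,\overline b_l]$ and the compactness argument still closes. With both the interior and the boundary handled, the induction goes through and establishes $a_1 > a_2 > a_3 > \cdots$, as claimed.
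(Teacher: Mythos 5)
Your proposal is correct. It uses the same raw material as the paper --- only the two inequalities of Lemma~\ref{lemma.basic_f} --- but organizes the argument differently. The paper first proves weak monotonicity by induction (dropping both $\max\{b,\cdot\}$ terms and minimizing the affine function at the endpoints), and then upgrades to strictness by evaluating $\min_{b\in[0,1]}\max\{b+(1+b)a_{l+1},\,(1-b)a_{l-2}+(1+b)a_{l+1}\}$ explicitly, which yields the quantitative gap $a_l \ge a_{l+1} + \frac{a_{l-2}-a_{l+1}}{2(a_{l-2}+1)}$; equality $a_l=a_{l+1}$ then propagates backwards to $a_2=a_3$, contradicting $a_2\ge a_3+\tfrac12$. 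You instead run a single forward induction proving strict decrease (and finiteness) directly: assuming $a_{l+1}\ge a_l$ you show $g(b)>a_l$ pointwise, handling $b<1$ by the strict term $(1-b)a_{l-2}>(1-b)a_l$ and the endpoint $b=1$ by retaining $\max\{b,(1-b)a_{l-2}\}\ge 1$ (the same term whose retention drives the paper's strictness step), and you convert the pointwise strict bound into $\min_b g(b)>a_l$ via continuity and compactness of $[0,\overline b_l]$, contradicting Lemma~\ref{lemma.basic_f}. This avoids the explicit min--max formula and the backward-propagation argument at the cost of being purely qualitative, whereas the paper's version produces an explicit gap inequality; both are complete, and your treatment of the possibly infinite $a_{l+1}$ and of the attainment of the minimum is sound.
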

	\begin{proof}
		For $l\ge 3$, by Lemma \ref{lemma.basic_f} we have
		\begin{align*}
		a_l &\ge \min_{b\in [0,\overline{b}_l]}\frac{ (1-b)a_{l-2} + (1+b)a_{l+1}}{2} \\
		&\ge \min_{b\in [0,1]}\frac{ (1-b)a_{l-2} + (1+b)a_{l+1}}{2} \\
		&= \min\left\{\frac{a_{l-2}+a_{l+1}}{2}, a_{l+1}\right \},
		\end{align*}
		where in the last step we have used the fact that an affine function attains its minimum at the boundary. Consequently, if we already know that $a_1\ge a_2\ge \cdots\ge a_l$, we must also have $a_l\ge a_{l+1}$. Hence, by induction on $l$, the sequence $\{a_l\}$ is decreasing. 
		
		To show strict decreasing property, by Lemma \ref{lemma.basic_f} again we have
		\begin{align*}
		a_l &\ge \min_{b\in [0,\overline{b}_l]}\frac{\max\{ b, (1-b)a_{l-2}\} + (1+b)a_{l+1}}{2} \\
		&\ge \min_{b\in [0,1]}\frac{\max\{b, (1-b)a_{l-2}\} + (1+b)a_{l+1}}{2} \\
		&= \frac{1}{2}\min_{b\in [0,1]} \max\{ b+(1+b)a_{l+1}, (1-b)a_{l-2}+(1+b)a_{l+1}\}. 
		\end{align*}
		For real numbers $r_1,r_2,s_1,s_2$ with $r_1>0\ge r_2, s_1\le s_2, r_1+s_1\ge r_2+s_2$, straightforward computation yields
		\begin{align*}
		\min_{x\in [0,1]} \max\{r_1x+s_1, r_2x+s_2\} = \frac{r_1s_2-r_2s_1}{r_1-r_2}. 
		\end{align*}
		Hence, 
		\begin{align*}
		a_l \ge \frac{2a_{l-2}a_{l+1}+a_{l-2}+a_{l+1}}{2(a_{l-2}+1)} = a_{l+1} + \frac{a_{l-2}-a_{l+1}}{2(a_{l-2}+1)}. 
		\end{align*}
		If we have $a_l=a_{l+1}$, we will also have $a_{l-2}=a_{l+1}$ based on the previous inequality. Due to the decreasing property of $\{a_l\}$, $a_{l-1}=a_l$ also holds, and repeating this process yields $a_2=a_3$, a contradiction to Lemma \ref{lemma.basic_f}. Hence $a_l>a_{l+1}$ for any $l$. 
	\end{proof}
	
	Based on Lemmas \ref{lemma.basic_f} and \ref{lemma.decreasing}, we are about to arrive at the desired contradiction. Fix any $\epsilon>0$ such that $\rho\triangleq \frac{1-\epsilon}{1+\epsilon}+(\frac{1-\epsilon}{1+\epsilon})^2>1$. Since $\lim_{l\to\infty}\overline{b}_l = 0$, we take $l_0>0$ large enough such that $\overline{b}_l<\epsilon$ for any $l>l_0$. Then for $l>l_0$, Lemma \ref{lemma.basic_f} yields
	\begin{align*}
	a_l &\ge \min_{b\in [0,\overline{b}_l]}\frac{ (1-b)a_{l-2} + (1+b)a_{l+1}}{2} \\
	&\ge \min_{b\in [0,\epsilon]}\frac{ (1-b)a_{l-2} + (1+b)a_{l+1}}{2} \\
	&= \min_{b\in [0,\epsilon]}\frac{ (a_{l+1}-a_{l-2})b + a_{l+1} + a_{l-2}}{2} \\
	&= \frac{ (a_{l+1}-a_{l-2})\epsilon + a_{l+1} + a_{l-2}}{2}
	\end{align*}
	where in the last step we have used $a_{l+1}\le a_{l-2}$ by Lemma \ref{lemma.decreasing}. A rearrangement of the previous inequality gives
	\begin{align*}
	a_l - a_{l+1} \ge \frac{1-\epsilon}{1+\epsilon}\cdot (a_{l-2} - a_l)
	\end{align*}
	for any $l>l_0$. Similarly, 
	\begin{align*}
	a_{l+1} - a_{l+2} &\ge \frac{1-\epsilon}{1+\epsilon}\cdot (a_{l-1} - a_{l+1})\\
	& \ge \frac{1-\epsilon}{1+\epsilon}\cdot (a_{l} - a_{l+1})\\
	& \ge \left(\frac{1-\epsilon}{1+\epsilon}\right)^2\cdot (a_{l-2}-a_l). 
	\end{align*}
	Adding them together yields
	\begin{align*}
	a_{l} - a_{l+2} \ge \left[\frac{1-\epsilon}{1+\epsilon} + \left(\frac{1-\epsilon}{1+\epsilon}\right)^2 \right]\cdot (a_{l-2}-a_l) = \rho(a_{l-2}-a_l). 
	\end{align*}
	
	Our choice of $\epsilon$ implies $\rho>1$, and therefore $a_{l+2k-2}-a_{l+2k}\ge \rho^k(a_{l-2}-a_l)$ for any $k\in\naturals$ and $l>l_0$. Since $a_{l+2k-2}-a_{l+2k}\le a_1$, and $a_{l-2}>a_l$ by Lemma \ref{lemma.decreasing}, this inequality implies that
	\begin{align*}
	a_1 \ge \rho^k (a_{l-2}-a_l)
	\end{align*}
	for any $k=1,2,\cdots$, a contradiction to the assumption $a_1<\infty$. The proof of Theorem \ref{thm.constrained_bet} is complete. 
	
	\subsection{Proof of Corollary \ref{cor.p_half}}
	First we observe that it suffices to prove the case where the initial list consists of a single positive number. This observation is due to that there is a positive probability to reduce the list length to $l_n=1$ after finitely many coups for any initial list $L_0$. 
	
	To study the combinatorial properties of the Labouchere system, we introduce the following definition: 
	\begin{definition}\label{goodlist}
		A list of positive real numbers $(a_1, a_2, a_3, \cdots, a_n)$ is called \emph{good} if it satisfies the following conditions:
		\begin{itemize}
			\item Every element in the list is positive, i.e., $a_i > 0$ for any $i$; 
			\item The list is non-decreasing, i.e., $a_1 \leq a_2 \leq \cdots \leq a_n$;
			\item The difference of the list is non-decreasing with difference at most $a_1$, i.e., $a_2 - a_1 \leq a_3 -a_2 \leq  \cdots \leq a_n- a_{n-1}\le a_1$. 
		\end{itemize}
	\end{definition}
	
	The key properties of a good list are summarized in the following lemmas.
	\begin{lemma}\label{lemma.goodness}
		If the initial list $L_0$ is good, the list $L_n$ after $n$-th coup is also good for any $n$.
	\end{lemma}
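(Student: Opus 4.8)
The plan is to proceed by induction on $n$, reducing everything to a single-coup update: it suffices to show that if a list $L = (a_1, \dots, a_k)$ is good, then both the post-win list and the post-loss list are good. Recall the Labouch\`ere dynamics: the current bet is $a_1 + a_k$ (or just $a_1$ if $k=1$); after a win we delete both endpoints to get $L^{\mathrm{win}} = (a_2, \dots, a_{k-1})$; after a loss we append the bet to get $L^{\mathrm{lose}} = (a_1, \dots, a_k, a_1 + a_k)$. I will verify the three defining conditions of Definition \ref{goodlist} — positivity, monotonicity, and convexity of the increments with the increments bounded by the first term — in each of the two cases.

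First I would handle the win case, which is the easy one. Deleting the first and last elements of $(a_1, \dots, a_k)$ yields $(a_2, \dots, a_{k-1})$. Positivity is immediate; monotonicity $a_2 \le \cdots \le a_{k-1}$ is inherited; the increment sequence $a_3 - a_2 \le \cdots \le a_{k-1} - a_{k-2}$ is a sub-chain of the original increment chain, hence still non-decreasing; and the largest such increment $a_{k-1} - a_{k-2} \le a_1 \le a_2$ is bounded by the new first element $a_2$. (Edge cases $k \le 2$ fall under the termination condition where the list empties, and $k = 3, 4$ produce a one- or two-element list which is trivially good, so no convexity constraint bites.) The only nontrivial point is that the bound $\le a_1$ on increments upgrades to $\le a_2$, which holds since $a_1 \le a_2$.

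The loss case is where the real content lies, and I expect it to be the main obstacle. Here the new list is $(a_1, \dots, a_k, a_{k+1})$ with $a_{k+1} := a_1 + a_k$. Positivity is clear. For monotonicity I need $a_{k+1} \ge a_k$, i.e., $a_1 + a_k \ge a_k$, which is just $a_1 > 0$. For the increment chain I must check that the new last increment $a_{k+1} - a_k = a_1$ is at least the previous last increment $a_k - a_{k-1}$; but goodness of $L$ gives exactly $a_k - a_{k-1} \le a_1$, so the increment sequence stays non-decreasing. Finally I need the new maximal increment to be bounded by the first element: the maximal increment of $L^{\mathrm{lose}}$ is $a_{k+1} - a_k = a_1$, and the new first element is still $a_1$, so the bound $\le a_1$ holds with equality. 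Thus all three conditions are preserved, and a clean induction on $n$ starting from the good list $L_0$ completes the argument. I would also remark that when $L_0$ is a single positive number $(a)$, it is vacuously good, which is the case needed for Corollary \ref{cor.p_half}.
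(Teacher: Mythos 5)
Your proof is correct and takes essentially the same route as the paper, which performs the identical one-step induction with the same two-case analysis (win: delete both endpoints; loss: append $a_1+a_l$) but simply leaves the verification of Definition~\ref{goodlist} to the reader, whereas you spell it out. The only cosmetic slip is the parenthetical claim that the two-element list arising from $k=4$ is ``trivially'' good---it still must satisfy the increment bound $a_3-a_2\le a_2$---but your general chain argument $a_{k-1}-a_{k-2}\le a_1\le a_2$ already covers that case, so nothing is missing.
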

	
	\begin{proof}
		It suffices to prove that, if $L_{n-1}=(a_1,\cdots,a_l)$ is a good list, so is $L_{n}$. Based on the outcome at $n$-th coup, there are only two possibilities: 
		\begin{itemize}
			\item $L_n = (a_1,a_2,\cdots,a_l,a_1+a_l)$, or 
			\item $L_n = (a_2,a_3,\cdots,a_{l-1})$. 
		\end{itemize}
		
		In either case, one can check from Definition \ref{goodlist} directly that $L_{n}$ is a good list, as desired. 
	\end{proof}
	\begin{lemma}\label{lemma.betting_prop}
		If the list $L_{n-1}$ is good and has length $l\ge 2$, in Labouchere system we have
		\begin{align*}
		\frac{B_n}{T_{n-1}} \le \sqrt{\frac{2}{l}} + \frac{2}{l}. 
		\end{align*}
	\end{lemma}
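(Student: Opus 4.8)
The plan is to argue directly from the combinatorial structure of a good list. Write $L_{n-1}=(a_1,\dots,a_l)$, so that $B_n=a_1+a_l$ and $T_{n-1}=\sum_{i=1}^l a_i$. Since $B_n/T_{n-1}$ is scale invariant I may normalize $a_1=1$, and I set $r:=a_l/a_1\in[1,l]$; the upper bound $r\le l$ is immediate because every consecutive difference of a good list is at most $a_1$, so $a_l=a_1+\sum_{i=1}^{l-1}(a_{i+1}-a_i)\le l a_1$. The goal then becomes $(a_1+a_l)/T_{n-1}\le\sqrt{2/l}+2/l$, which I will obtain from two complementary lower bounds on $T_{n-1}$.

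First, monotonicity gives the crude bound $T_{n-1}=\sum_{i=1}^l a_i\ge l a_1$, hence $a_1/T_{n-1}\le 1/l$ and $a_l/T_{n-1}\le r/l$. Second, I exploit that the differences are non-decreasing and bounded by $a_1$: for $0\le k\le l-1$ this yields $a_{l-k}=a_l-\sum_{j=l-k}^{l-1}(a_{j+1}-a_j)\ge a_l-ka_1$. Summing over $k=0,1,\dots,K$ with $K:=\floor{r}-1$ — so $K+1=\floor{r}\le l$, and these are $K+1$ genuine positive terms of the sum $T_{n-1}$ — gives
\begin{align*}
T_{n-1}\ \ge\ \sum_{k=0}^{K}(a_l-ka_1) &= \floor{r}\,a_1\Big(r-\tfrac{\floor{r}-1}{2}\Big) \\
&\ge\ \floor{r}\,a_1\cdot\tfrac{r+1}{2}\ \ge\ (r-1)\tfrac{r+1}{2}\,a_1\ =\ \tfrac{r^2-1}{2}\,a_1 ,
\end{align*}
where I used $r\ge\floor{r}\ge r-1$. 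Hence $a_l/T_{n-1}\le 2r/(r^2-1)$ for $r>1$ (the case $r=1$ is trivial, since then the list is constant and the ratio equals $2/l$).

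Combining the two estimates, $a_l/T_{n-1}\le\min\{\,r/l,\ 2r/(r^2-1)\,\}$. A short case split on whether $r^2-1\le 2l$ or $r^2-1\ge 2l$ — using in the second case that $r\mapsto r-1/r$ is increasing, so $r-1/r\ge \sqrt{2l+1}-1/\sqrt{2l+1}=2l/\sqrt{2l+1}$ — shows this minimum is at most $\sqrt{2l+1}/l$; then $\sqrt{2l+1}\le\sqrt{2l}+1$ (clear upon squaring) yields $a_l/T_{n-1}\le\sqrt{2/l}+1/l$, and adding $a_1/T_{n-1}\le 1/l$ gives $B_n/T_{n-1}=(a_1+a_l)/T_{n-1}\le\sqrt{2/l}+2/l$, as claimed. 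I expect the only real work to be the bookkeeping in the second lower bound (carrying the floor $\floor{r}$ and checking the index range $l-k\ge 1$) together with the elementary optimization over $r$; keeping exactly the top $\floor{r}$ entries of the list in that sum is what forces the balance $r\asymp\sqrt{2l}$ and produces the leading term $\sqrt{2/l}$.
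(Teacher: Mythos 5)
Your proof is correct and rests on essentially the same idea as the paper's: the good-list inequality $a_{l-k}\ge a_l-ka_1$ combined with a partial (tail) sum lower bound on $T_{n-1}$ and the trivial bound $T_{n-1}\ge la_1$. The only difference is bookkeeping --- the paper sums the top $k=\lceil\sqrt{2l}\rceil$ terms and optimizes over $k$, whereas you sum the top $\lfloor r\rfloor$ terms with $r=a_l/a_1$ and then optimize via a case split on $r$ versus $\sqrt{2l+1}$ --- and both routes land on the same bound.
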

	\begin{proof}
		Let $L_{n-1}=(a_1,\cdots,a_l)$. By definition, for any $k\le l$ we have
		\begin{align*}
		a_l \le a_{l-1} + a_1 \le a_{l-2} + 2a_1 \le \cdots \le a_{l-k} + ka_1.
		\end{align*}
		As a result,
		\begin{align*}
		a_l \le \frac{1}{k}\sum_{j=1}^k (a_{l-j} + ja_1) \le \frac{1}{k}\sum_{j=1}^l a_{j} + \frac{k+1}{2}\cdot a_1. 
		\end{align*}
		
		Note that the current bet size is $B_n=a_1+a_l$, and the current target is $T_{n-1}=\sum_{j=1}^l a_j$. Hence, for any $k\le l$ we have
		\begin{align*}
		\frac{B_n}{T_{n-1}} = \frac{a_1+a_l}{\sum_{j=1}^l a_j} \le \frac{(k+3)a_1}{2\sum_{j=1}^l a_j} + \frac{1}{k} \le \frac{k+3}{2l} + \frac{1}{k}. 
		\end{align*}
		Setting $k=\lceil \sqrt{2l}\rceil\le l$ arrives at
		\begin{align*}
		\frac{B_n}{T_{n-1}} \le \frac{\lceil \sqrt{2l}\rceil + 3}{2l} + \frac{1}{\lceil \sqrt{2l}\rceil} \le \frac{\sqrt{2l}+4}{2l} + \frac{1}{\sqrt{2l}} = \sqrt{\frac{2}{l}} + \frac{2}{l}, 
		\end{align*}
		as claimed. 
	\end{proof}
	
	Note that the initial list $L_0$ consisting of a single positive number is good, by Lemma \ref{lemma.goodness} we know that all future lists $L_n$ are also good. Moreover, by setting
	\begin{align*}
	\overline{b}_l = \min\left\{\sqrt{\frac{2}{l}} + \frac{2}{l}, 1\right\}, 
	\end{align*}
	by Lemma \ref{lemma.betting_prop} we know that $B_n\le \overline{b}_{l_{n-1}}T_{n-1}$ always holds. Note that $\lim_{l\to\infty} \overline{b}_l=0$, Theorem \ref{thm.constrained_bet} yields $\bE[B^\star]=\infty$ in Labouchere system, as desired.

	\section{Acknowledgement}
	The authors would like to thank Stewart Ethier for raising this question and helpful suggestions in improving this paper, and Persi Diaconis for helpful discussions. 
	
	\bibliographystyle{alpha}
	\bibliography{di}

\end{document}